\documentclass[conference]{IEEEtran}
\IEEEoverridecommandlockouts

\usepackage{cite}
\usepackage{amsmath,amssymb,amsfonts}
\usepackage{amsthm}
\usepackage{algorithmic}
\usepackage{graphicx}
\usepackage{textcomp}
\usepackage{xcolor}
\usepackage{stmaryrd}
\usepackage{bm}
\usepackage{tikz}
\usepackage{accents}
\usetikzlibrary{arrows, arrows.meta}
\usetikzlibrary{shapes.geometric, arrows}
\graphicspath{{images/}}
\usepackage{textcomp}

\newcommand\restr[2]{{
		\left.\kern-\nulldelimiterspace 
		#1 
		\littletaller 
		\right|_{#2} 
}}

\newcommand{\littletaller}{\mathchoice{\vphantom{\big|}}{}{}{}}

\newcommand\munderbar[1]{%
	\underaccent{\bar}{#1}}

\newtheorem{definition}{Definition}
\newtheorem{theorem}{Theorem}
\newtheorem{proposition}{Proposition}
\newtheorem{lemma}{Lemma}

\newtheorem{assumption}{Assumption}
\newtheorem{remark}{Remark}
\allowdisplaybreaks

\def\BibTeX{{\rm B\kern-.05em{\sc i\kern-.025em b}\kern-.08em
    T\kern-.1667em\lower.7ex\hbox{E}\kern-.125emX}}
\begin{document}

\title{Vertical Contracts for Safety Control
}

\author{\IEEEauthorblockN{Armin Pirastehzad and Bart Besselink}
\IEEEauthorblockA{\textit{Bernoulli Institute for Mathematics, Computer Science and Artificial Intelligence} \\
\textit{University of Groningen}\\
Groningen, The Netherlands \\
email: a.pirastehzad@rug.nl; b.besselink@rug.nl}
}

\maketitle

\begin{abstract}
	We propose a methodology that exploits the contract formalism to characterize the continuous-time safety control problem, which is often difficult to address, in terms of a discrete-time one, for which numerous efficient solution scheme exist. We construct contracts as pairs of assumptions and guarantees which are set-valued mappings that describe the safe boundaries within which the system must operate. By formalizing safety control as contract implementation, we develop a vertical hierarchy according to which we translate implementation from continuous to discrete time. We accomplish this by constructing a discrete-time system and a contract such that a solution to the  continuous-time implementation problem can be characterized in terms of a solution to its discrete-time counterpart. We then use this characterization to construct a control input that establishes implementation in continuous time on the basis of the control sequence that achieves implementation in discrete time.  
\end{abstract}

\begin{IEEEkeywords}
	Formal methods, contract-based design, safety, hybrid systems, control barrier functions. 
\end{IEEEkeywords}

\section{Introduction}\label{Sec_Introduction}
	\IEEEPARstart{S}{afety} has become a crucial requisite in many engineering applications, as they require mobilization of systems whose malfunction may result in casualty and/or financial loss. It is due to such significance that safety guarantee often appears as an integral objective in control system design. Despite the development of many efficient safety control schemes for both continuous-time and discrete-time systems, guaranteeing safety for the former remains significantly more challenging than for the latter. This paper proposes a methodology for the translation of a continuous-time safety control problem, which is often difficult to address, into a discrete-time one, which can be efficiently addressed by the existing control schemes.
	
	As for continuous-time systems, safety control schemes (\textit{e.g.,} those proposed in \cite{7782377,8405547,8796030,XU2018195,8404080,9161270,9683085, 10383564,9992999,10421775,9147387,9483028, 10886042,9516971,COHEN2024100947,GARG2024100945,lindemann2025formal}) usually have in common that they formulate safety as forward invariance of a spatial set specified by (super)level sets of the so-called \emph{control barrier functions} \cite{GARG2024100945}. This leads to the characterization of invariance as the feasibility of a quadratic program, whose solution gives the control input that establishes invariance. Despite its elegance and efficiency, such an approach presents a few challenges. Namely, constraints on the input may severely hinder the construction of a \emph{valid} control barrier function, which jeopardizes the \emph{feasibility} of the quadratic program based on which the control input is obtained. Furthermore, since the solution of this quadratic program gives the control input at each instant of time, construction of the control input necessitates the solution of the quadratic program in a \emph{sampled-data} fashion. This results in evaluation of the control input at only specific instants of time rather than at all times. The implementation of such control, however, guarantees safety only at these specific instants, whereas it does not necessarily ensure safety in between them. 
	
	While safety control is challenging in continuous time, it is efficiently addressed in discrete time. For discrete-time systems, safety control schemes (\textit{e.g.,} those proposed in \cite{belta2019formal,7039363,10.1145/3575870.3587120,CARDONA2025101576,9769752,8310901,9351643,9261610,kouvaritakis2016model}) exploit \emph{sequentiality} of the discrete-time trajectories to formulate safety as constraints in an optimization problem that is completely in terms of the control input sequence. This allows for the utilization of efficient algorithms, such as those from convex optimization \cite{boyd2004convex}, to obtain the control sequence that achieves safety. 
	
	Motivated by the efficiency of safety control in discrete time, the goal of this paper is to propose a methodology for the formulation of a continuous-time safety control problem in terms of a discrete-time one. Such formulation then enables the derivation of a control input that establishes safety in continuous time based on the control sequence that achieves safety in discrete time. The main contributions of this paper are listed as follows. 
	
	First, we adopt the \emph{contract} formalism \cite{EDA-053} to develop a unifying framework for expressing safety specifications for continuous-time and discrete-time systems. We construct contracts as pairs of set-valued mappings, called \emph{assumptions} and \emph{guarantees}, that specify safety of the input-state trajectories. Specifically, the assumptions indicate restrictions/limitations that the control input must satisfy, whereas the guarantees describe safe boundaries within which the system must operate. We then formalize the safety control problem within this framework as \emph{contract implementation}, which concerns the feasibility of enforcing safety (prescribed by the guarantees) despite the design limitations (indicated by the assumptions). 
	
	Second, we establish contract implementation according to a \emph{vertical} hierarchy, where we translate contract implementation from continuous to discrete time such that a solution of the continuous-time problem can be characterized by that of the discrete-time one. For some suitable sampling time, we discretize the continuous-time contract to construct a discrete-time one. We then discretize the continuous-time system (with respect to the same sampling time) into a discrete-time model according to the notion of system interpolation \cite{Pirastzehzad2025}, which allows for the realization of input-state trajectories of the continuous-time system as piecewise polynomial interpolations of input-state trajectories of the discrete-time model. Subsequently, we use such realization to derive a condition that guarantees implementation in continuous time (which corresponds to the continuous-time system and contract) on the basis of implementation in discrete time (which corresponds to the discrete-time model and the discretized contract). Lastly, given a control sequence that establishes implementation in discrete time, we construct a control input that yields implementation in continuous time. 
	
	The rest of this paper is organized as follows. In Section~\ref{Sec_Contracts}, we introduce contracts and formalize safety control as contract implementation. In Section~\ref{Sec_ProblemStatement}, we develop the vertical hierarchy for contract implementation. In Section~\ref{Sec_Discretization}, we propose discretization techniques for the continuous-time contracts and systems. We then use these techniques to translate contract implementation from continuous to discrete time in Section~\ref{Sec_Translation}. We illustrate these results in a numerical example in Section~\ref{Sec_NumericalExample} and, finally, conclude the paper by Section~\ref{Sec_Conclusion}.  

\subsubsection*{Notation}\label{Sec_Notation}
The set of real numbers and integers are denoted by $\mathbb{R}$ and $\mathbb{Z}$. Additionally, the closed real interval $\{t\in\mathbb{R} \vert t_1\leq t \leq t_2\}$ and the closed integer interval $\{k\in\mathbb{Z} \vert k_1\leq k \leq k_2\}$ are indicated by $[t_1,t_2]_{\mathbb{R}}$ and $[k_1,k_2]_{\mathbb{Z}}$, respectively. We denote the set of all subsets of a set $X\subset\mathbb{R}^n$ (also known as the power set of $X$) by $\operatorname{P}(X)$. We also denote by $\operatorname{conv}(X)$ the set of all convex subsets of $X$, \textit{i.e.,} $\operatorname{conv}(X) = \{C\subset X \vert \forall c_1,c_2\in C, \forall \alpha\in[0,1]_{\mathbb{R}}: \ \alpha c_1 + (1-\alpha) c_2 \in C\}$. We define the vectorization of a matrix $M =[M_1 \, M_2\,\cdots\,M_m]$, with columns $M_1,M_2,\cdots,M_m\in\mathbb{R}^n$, as $\operatorname{vec}(M) = (M_1^\top,M_2^\top,\cdots,M_m^\top)^\top$. We also define the operator $\operatorname{col}(\cdot)$ such that $\operatorname{col}(M) = \{M_1,M_2,\cdots,M_m\}$. The symbol $\otimes$ is utilized to denote the Kronecker product. Indicating the identity and zero matrices of appropriate dimensions by $I$ and $0$, the Kronecker sum of square matrices $N$ and $M$ is defined as $N\oplus M = N\otimes I + I\otimes M$. For sets $V$ and $X$,  we denote the set of all  set-valued mappings $\pi: V \rightrightarrows \operatorname{conv}(X)$ by $\Pi_V^X$. For any $\pi \in \Pi_V^X$, we denote by $\operatorname{sig}(\pi)$ the set of all functions $s: V\rightarrow X$ such that $s(v) \in \pi(v)$ for all $v\in V$. We use $\mathbb{P}_N^n$ to denote the space of all $n$-dimensional vector-valued polynomials up to degree $N$. Given sets $V$ and $X$, we define the restriction of a function $f : V \rightarrow X$ to the subset $V'\subset V$ as the function $\restr{f}{V'} : V' \rightarrow X$ such that $\forall v \in V' : \restr{f}{V'} (v) = f(v)$. We also define the restriction of a set $\mathcal{F}$ of functions $f:V \rightarrow X$ to $V'$ as $\restr{\mathcal{F}}{V'} = \left\{\restr{f}{V'} \big\vert f\in\mathcal{F}\right\}$.  
\section{Safety Contracts}\label{Sec_Contracts}
We develop a formal framework for expressing safety specifications in both continuous-time and discrete-time settings. In continuous time, we consider the system
\begin{equation}\label{ContinuousSystem}
	\bm{\Sigma}_{c}: \begin{aligned}
		\dot{x}_{c}(t) &= A_{c}x_{c}(t) + B_{c}u_{c}(t),
	\end{aligned} 
\end{equation}
with state $x_{c}\in\mathbb{R}^n$ and control input $u_{c} \in \mathbb{R}^m$. We denote by $x_{c}(t;x_{0},u_{c})$ the state solution, at time $t$, of \eqref{ContinuousSystem} for initial condition $x_{c}(0) = x_{0}$ and input $u_c$.

Correspondingly, in discrete time, we consider the system
\begin{equation}\label{DiscreteSystem}
	\bm{\Sigma}_{d}: \begin{aligned}
		x_{d}(k+1) &= A_{d}x_{d}(k) + B_{d}u_{d}(k),
	\end{aligned} 
\end{equation}
with state $x_{d}\in\mathbb{R}^n$ and $u_{d}\in\mathbb{R}^m$. We use notation similar to that of \eqref{ContinuousSystem} to denote the state sequence of \eqref{DiscreteSystem}. 

We adopt the contract formalism \cite{EDA-053} to express \emph{safety} for the continuous-time system \eqref{ContinuousSystem} and its discrete-time counterpart \eqref{DiscreteSystem}. To treat both cases uniformly, we define $\mathbb{S}\in\{\mathbb{R},\mathbb{Z}\}$ as a generic time domain and introduce the symbol $s\in\{c,d\}$ as a generic label that indexes continuous-time or discrete-time systems. For a given terminal time $S\in\mathbb{S}$, we specify the \emph{safe} input-state behavior over the time interval $[0,S]_{\mathbb{S}}$ by introducing \emph{assumptions} $\mathcal{A}_{s}\in\Pi_{[0,S]_{\mathbb{S}}}^{\mathbb{R}^m}$ and \emph{guarantees} $\mathcal{G}_{s}\in\Pi_{[0,S]_{\mathbb{S}}}^{\mathbb{R}^n}$. Based on these two set-valued mappings, we define contracts as follows. 
\begin{definition}\label{Def_Contracts}
	A contract $\mathcal{C}_{s}$ is a pair $(\mathcal{A}_{s},\mathcal{G}_{s})$ of assumptions $\mathcal{A}_{s}$ and guarantees $\mathcal{G}_{s}$. 
\end{definition}

We regard a contract $\mathcal{C}_{s}$ as a formal safety specification on the input-state behavior of $\bm{\Sigma}_s$. 
\begin{definition}\label{Def_Implementation}
	Given a contract $\mathcal{C}_{s} = (\mathcal{A}_{s},\mathcal{G}_{s})$, for an initial condition $x_0\in\mathbb{R}^n$, the system $\bm{\Sigma}_{s}$ implements the contract $\mathcal{C}_{s}$, denoted by $\bm{\Sigma}_{s} \models \mathcal{C}_{s}$, if there exists a control input $u_{s}\in\operatorname{sig}(\mathcal{A}_{s})$ such that $\restr{x_{s}(\cdot; x_0,u_{s})}{[0,S]_{\mathbb{S}}} \in \operatorname{sig}(\mathcal{G}_{s})$.
\end{definition}

The contract $\mathcal{C}_{s}$ specifies the safety of input-state trajectories over the time interval $[0,S]_{\mathbb{S}}$. It follows from Definition~\ref{Def_Implementation} that over the time interval $[0,S]_{\mathbb{S}}$, the contract $\mathcal{C}_{s}$ requires the control input $u_{s}$ to never exceed the \emph{limitations} imposed by assumptions $\mathcal{A}_{s}$, while it expects the state solution $x_{s}$ to remain within the \emph{safe boundaries} prescribed by guarantees $\mathcal{G}_{s}$. We therefore observe that $\bm{\Sigma}_{s}\models\mathcal{C}_{s}$ basically ensures the enforcement of safety (as prescribed by $\mathcal{G}_{s}$) despite the design limitations (as dictated by $\mathcal{A}_{s}$).
\section{Vertical Contract implementation}\label{Sec_ProblemStatement}
Consider a continuous-time system $\bm{\Sigma}_c$ and let a contract $\mathcal{C}_c = (\mathcal{A}_c,\mathcal{G}_c)$ be such that $\mathcal{A}_c\in \Pi_{[0,T]_{\mathbb{R}}}^{\mathbb{R}^m}$ and $\mathcal{G}_c\in\Pi_{[0,T]_{\mathbb{R}}}^{\mathbb{R}^n}$ for some $T>0$. For an initial condition $x_0\in\mathbb{R}^n$, our objective is to establish $\bm{\Sigma}_c \models \mathcal{C}_c$. This, by Definition~\ref{Def_Implementation}, reduces to the construction of a continuous-time control input $u_c\in\operatorname{sig}(\mathcal{A}_c)$ such that 
\begin{equation}\label{ContinuousTimeCondition}
	\restr{x_c(\cdot;x_0,u_c)}{[0,T]_{\mathbb{R}}} \in \operatorname{sig}(\mathcal{G}_c). 
\end{equation}
However, construction of such control input is not necessarily possible for a general contract $\mathcal{C}_c$. In fact, we note from the continuity of input-state trajectories in $\bm{\Sigma}_c$ that establishment of \eqref{ContinuousTimeCondition} necessitates a degree of `smoothness' in the behavior of set-valued mappings $\mathcal{A}_c$ and $\mathcal{G}_c$ with respect to time. To ensure the feasibility of this problem, we restrict our attention to contracts that are \emph{smooth} in the following sense. 
\begin{assumption}\label{Assumption}
	For all $t\in[0,T]_{\mathbb{R}}$, there exist constants $\munderbar{r}_{t},\bar{r}_t>0$ such that 
	\begin{equation}\label{Smoothness}
		\bigcap\limits_{\hat{t} \in [t-\munderbar{r}_t,t+\bar{r}_t]_{\mathbb{R}}} \mathcal{A}_c(\hat{t})\neq \emptyset, \quad \bigcap\limits_{\hat{t} \in [t-\munderbar{r}_t,t+\bar{r}_t]_{\mathbb{R}}} \mathcal{G}_c(\hat{t})\neq \emptyset
	\end{equation}
	where the intervals are truncated at $t=0$ and $t = T$.
\end{assumption}
Assumption~\ref{Assumption} basically ensures that the sets prescribed by $\mathcal{A}_c$ and $\mathcal{G}_c$ do not abruptly change in time, \textit{i.e.,} they \emph{smoothly} vary over time. This is a mild assumption, as many safety specifications in practice satisfy the property \eqref{Smoothness}.

With this characterization of continuous-time contracts, we now give a formal statement of the problem we aim to solve. 
\paragraph*{Continuous-time Contract Implementation} Given a continuous-time system $\bm{\Sigma}_c$ and a contract $\mathcal{C}_c = (\mathcal{A}_c,\mathcal{G}_c)$ that satisfies Assumption~\ref{Assumption}, for an initial condition $x_0\in\mathbb{R}^n$, find a continuous-time control input $u_c\in\operatorname{sig}(\mathcal{A}_c)$ that achieves \eqref{ContinuousTimeCondition}.

While contract implementation is challenging in continuous time, it can be efficiently addressed by numerous control schemes (\textit{e.g.,} model predictive control (MPC) \cite{7039363} and temporal logic control \cite{6907641,belta2019formal}) in discrete time. Motivated by this, we propose a methodology for the \emph{translation} of contract implementation from continuous to discrete time such that a solution to the continuous-time problem can be characterized by that of its discrete-time counterpart. 

For some suitable sampling time $\tau>0$, we will first discretize $\mathcal{C}_c$ to construct the discrete-time contract $\mathcal{C}_d = (\mathcal{A}_d,\mathcal{G}_d)$ such that $\mathcal{A}_d \in\Pi_{[0,\ell]_{\mathbb{Z}}}^{\mathbb{R}^m}$ and $\mathcal{G}_d \in\Pi_{[0,\ell]_{\mathbb{Z}}}^{\mathbb{R}^n}$ for some integer $\ell>0$. Then, with respect to the same sampling time, we will discretize $\bm{\Sigma}_c$ to construct the discrete-time system $\bm{\Sigma}_d$ in such a way that the following implication holds: 
\begin{equation}\label{ContractImplication}
	\bm{\Sigma}_d\models\mathcal{C}_d \Longrightarrow \bm{\Sigma}_c\models\mathcal{C}_c.
\end{equation}
At last, for a suitable $u_d\in\operatorname{sig}(\mathcal{A}_d)$ that achieves $\restr{x_d(\cdot;x_0,u_d)}{[0,\ell]_{\mathbb{Z}}} \in \operatorname{sig}(\mathcal{G}_d)$, we will exploit the implication \eqref{ContractImplication} to construct a $u_c\in\operatorname{sig}(\mathcal{A}_c)$ that yields \eqref{ContinuousTimeCondition}.

We will therefore establish contract implementation according to a \emph{vertical} hierarchy, where we formulate  continuous-time implementation (which is a difficult problem to solve) in terms of discrete-time implementation (which can be efficiently addressed).
%
%
%
%
%
%
\section{Discretization}\label{Sec_Discretization}
As the first step towards vertical contract implementation, we propose discretization techniques for the continuous-time contract $\mathcal{C}_c$ and the system $\bm{\Sigma}_c$.  
\subsection{Contract Discretization}\label{Sec_ContractDiscretization}
Consider a continuous-time contract $\mathcal{C}_c = (\mathcal{A}_c,\mathcal{G}_c)$ such that Assumption~\ref{Assumption} holds. We discretize $\mathcal{C}_c$ by sampling the assumptions $\mathcal{A}_c$ and the guarantees $\mathcal{G}_c$, which leads to the construction of a discrete-time contract $\mathcal{C}_d = (\mathcal{A}_d,\mathcal{G}_d)$. Considering that the ultimate goal is to characterize the implementation of $\mathcal{C}_c$ in terms of $\mathcal{C}_d$, we must sample the assumptions $\mathcal{A}_c$ and the guarantees $\mathcal{G}_c$ for sufficiently many times such that no crucial information is lost through sampling and that $\mathcal{C}_d$ captures all the essential aspects of $\mathcal{C}_c$. 

We select a sampling time $\tau>0$ sufficiently small to avoid loss of crucial information during sampling. For this purpose, we recall from Assumption~\ref{Assumption} that for all $t\in[0,T]_{\mathbb{R}}$, there exist constants $\munderbar{r}_t,\bar{r}_t>0$ such that \eqref{Smoothness} holds. We then define $r_c =  \min_{t\in[0,T-\munderbar{r}_T]_{\mathbb{R}}} \bar{r}_t$ and note that such constant $r_c$ exists as the minimum is taken over a closed time interval. We also emphasize that $r_c\leq \munderbar{r}_T$, which results from the fact that $\bar{r}_{T-\munderbar{r}_T} \leq \munderbar{r}_T$. We let $\lceil \frac{T}{r_c}\rceil$ denote the least integer larger than or equal to $\frac{T}{r_c}$. Accordingly, by selecting any integer $\ell_d\geq \lceil \frac{T}{r_c}\rceil$, we define the sampling time $\tau$ as
\begin{equation}\label{SamplingTime}
	\tau = \frac{T}{\ell_d}.
\end{equation}

We now sample the assumptions $\mathcal{A}_c$ and the guarantees $\mathcal{G}_c$ with respect to the sampling time \eqref{SamplingTime}. We thus construct the discrete-time set-valued mappings $\mathcal{A}_d: [0,\ell_d]_{\mathbb{Z}} \rightrightarrows \operatorname{P}(\mathbb{R}^m)$ and $\mathcal{G}_d: [0,\ell_d]_{\mathbb{Z}} \rightrightarrows \operatorname{P}(\mathbb{R}^n)$ such that 
\begin{subequations}\label{SampledContract}
	\begin{equation}\label{MidCondition}
		\begin{aligned}
			\mathcal{A}_d(k)\! = \!\!\!\bigcap\limits_{t \in [k\tau,(k+1)\tau]_{\mathbb{R}}} \!\!\!\mathcal{A}_c(t),\;\; \
			\mathcal{G}_d(k)\!= \!\!\!\bigcap\limits_{t \in [k\tau,(k+1)\tau]_{\mathbb{R}}} \!\!\!\mathcal{G}_c(t),
		\end{aligned}
	\end{equation}
	\text{for all $k\in [0,\ell_d-1]_{\mathbb{Z}}$ and that }
	\begin{equation}\label{TerminalCondition}
		\begin{aligned}
			\mathcal{A}_d(\ell_d) = \mathcal{A}_c(T),\quad \mathcal{G}_d(\ell_d) = \mathcal{G}_c(T).
		\end{aligned}
	\end{equation}
\end{subequations}
We note from \eqref{SamplingTime} that $\tau \leq r_c$. Given that $r_c\leq \munderbar{r}_T$, it readily follows from \eqref{Smoothness} that $\mathcal{A}_d(k) \neq \emptyset$ and $\mathcal{G}_d(k) \neq \emptyset$ for all $k\in [0,\ell_d]_{\mathbb{Z}}$. We then recall that $\mathcal{A}_c\in\Pi_{[0,T]_{\mathbb{R}}}^{\mathbb{R}^m}$ and $\mathcal{G}_c\in\Pi_{[0,T]_{\mathbb{R}}}^{\mathbb{R}^n}$, which implies that $\mathcal{A}_c(t) \in \operatorname{conv}(\mathbb{R}^m)$ and $\mathcal{G}_c(t) \in \operatorname{conv}(\mathbb{R}^n)$ for all $t\in[0,T]_{\mathbb{R}}$, \textit{i.e.,} $\mathcal{A}_c(t)$ and $\mathcal{G}_c(t)$ are convex sets. Since the intersection of any collection of convex sets is also convex, it directly follows from \eqref{SampledContract} that $\mathcal{A}_d(k)\in \operatorname{conv}(\mathbb{R}^m)$ and $\mathcal{G}_d(k)\in \operatorname{conv}(\mathbb{R}^n)$ for all $k\in [0,\ell_d]_{\mathbb{Z}}$. This indicates that $\mathcal{A}_d \in \Pi_{[0,\ell_d]_{\mathbb{R}}}^{\mathbb{R}^m}$ and $\mathcal{G}_d \in \Pi_{[0,\ell_d]_{\mathbb{R}}}^{\mathbb{R}^n}$. We finally collect $\mathcal{A}_d$ and $\mathcal{G}_d$ into $\mathcal{C}_d = (\mathcal{A}_d,\mathcal{G}_d)$ and immediately conclude that $\mathcal{C}_d$ is a discrete-time contract according to the Definition~\ref{Def_Contracts}.

We observe that the discrete-time contract $\mathcal{C}_d$ constructed as above encodes the safety specifications prescribed by the continuous-time contract $\mathcal{C}_c$. In fact, it follows from the non-emptiness of discrete-time assumptions $\mathcal{A}_d$ and guarantees $\mathcal{G}_d$ (at each instant $k$) that the safety specifications described by continuous-time assumptions $\mathcal{A}_c$ and guarantees $\mathcal{G}_c$ over each time interval $[k\tau,(k+1)\tau]_{\mathbb{R}}$ are respectively captured by the safety specifications indicated by $\mathcal{A}_d$ and $\mathcal{G}_d$ at instant $k$. 
\subsection{System Discretization}\label{Sec_SysDiscretization}
We discretize $\bm{\Sigma}_c$ according to the notion of system interpolation \cite{Pirastzehzad2025}, which is a comparative framework for systems from different time domains. 

Towards a formal definition of the notion of system interpolation, we first give the following interpretation of piecewise polynomial interpolation. 
\begin{definition}\label{DefSignalInterpolation}
	For integers $\ell,p>0$, let a discrete-time signal $s_d: [0,\ell]_{\mathbb{Z}} \rightarrow \mathbb{R}^p$ be given. For a sampling time $\tau>0$ and an integer $N\geq 1$, a continuous-time signal $s_c: [0,\ell \tau]_{\mathbb{R}}\rightarrow\mathbb{R}^p$ is said to be an $N$-th order interpolation of $s_d$ with respect to the sampling time $\tau$ if there exist polynomial functions $P_0,P_1,\cdots,P_{\ell-1}\in\restr{\mathbb{P}_N^{p}}{[0,\tau]_{\mathbb{R}}}$ such that for all $k \in [0,\ell-1]_{\mathbb{Z}}$, we have that $s_c(k\tau) = s_d(k)$ and $s_c\big((k+1)\tau\big) = s_d(k+1)$, while $s_c(k\tau+t) = P_k(t)$ for all $t\in[0,\tau]_{\mathbb{R}}$.  
\end{definition}
We denote by $\mathbb{I}_{N}^\tau(s_d)$ the set of all $N$-th order interpolations of $s_d$ with respect to $\tau$. Based on this,  we give a formal definition of system interpolation, which is taken from \cite{Pirastzehzad2025}.
\begin{definition}\label{DefInterpolatingSystem}
	For a sampling time $\tau>0$ and an integer $N\geq 1$, a continuous-time system $\bm{\Sigma}_c$ is said to be an $N$-th order interpolator of a discrete-time system $\bm{\Sigma}_d$ with respect to the sampling time $\tau$ if for any integer $\ell>0$, any initial condition $x_0\in\mathbb{R}^n$, and any discrete-time input $u_d:[0,\ell]_{\mathbb{Z}} \rightarrow\mathbb{R}^m$, there exists a continuous-time input $u_c \in \mathbb{I}_N^\tau (u_d)$ such that 
	\begin{equation}\label{InterpolatingCondition}
		\restr{x_c (\cdot; x_0,u_c)}{[0,\ell\tau]_{\mathbb{R}}} \in \mathbb{I}_N^\tau \Big(\restr{x_d(\cdot;x_0,u_d)}{[0,\ell]_{\mathbb{Z}}}\Big).
	\end{equation}
\end{definition}
It follows from Definition~\ref{DefInterpolatingSystem} that $\bm{\Sigma}_c$ is able to generate an $N$-th order interpolation of any input-state trajectory of $\bm{\Sigma}_d$. 

We obtain an algebraic characterization of system interpolation on the basis of the so-called shifted Legendre polynomials (for further details, see, \textit{e.g.,} \cite{Canuto2006}).

 Given any integer $k\geq 0$, let $\mathfrak{L}_k$ be the \emph{standard} Legendre polynomial of degree $k$ (see, \textit{e.g.,} \cite{Canuto2006}). Then, for $\tau>0$, the \emph{shifted} Legendre polynomial of degree $k$, over the interval $[0,\tau]_{\mathbb{R}}$, is defined as $\mathfrak{L}_k^\tau (t) = \mathfrak{L}_k(\frac{2t}{\tau} -1)$. For an integer $N\geq 1$, we let the nodes $0 = t_{N,0}<t_{N,1}<\ldots<t_{N,N} < \tau$ be such that $\mathfrak{L}_N^\tau(t_{N,i}) + \mathfrak{L}_{N+1}^\tau(t_{N,i}) = 0$ for all $i=0,1,\ldots,N$. We note that such nodes always exist as $\mathfrak{L}_N^\tau + \mathfrak{L}_{N+1}^\tau$ has $N+1$ distinct roots (see \cite[Section 2.2.3]{Canuto2006} for further details). For each $i=0,1,\ldots,N$, these nodes define a function
	\begin{equation}\label{PhiFunction}
		\phi_i(t) = \prod_{j=0,j\neq i}^{N}\frac{t-t_{N,j}}{t_{N,i}-t_{N,j}},
	\end{equation}
	which is an $N$-th order polynomial such that $\phi_i(t_{N,i}) = 1$, whereas $\phi_i(t_{N,j}) = 0$ for all $j\neq i$. Based on these polynomials, we define the vector-valued polynomial function $\Phi:[0,\tau]_{\mathbb{R}}\rightarrow\mathbb{R}^{N}$ such that 
	\begin{equation*}
		\forall t\in[0,\tau]_{\mathbb{R}}: \quad \Phi(t) = \begingroup 
		\setlength\arraycolsep{1.5pt}\begin{bmatrix}
			\phi_1(t)&\cdots&\phi_N(t)
		\end{bmatrix}\endgroup^\top,
	\end{equation*}
	and, accordingly, we construct 
	\begin{equation*}
		\begin{aligned}
			\sigma = \begingroup 
			\setlength\arraycolsep{1.25pt}\begin{bmatrix}
				\dot{\phi}_0(t_{N,1})&\cdots&\dot{\phi}_0(t_{N,N})
			\end{bmatrix}\endgroup, \; \Psi = \begingroup 
			\setlength\arraycolsep{2.85pt}\begin{bmatrix}
				\dot{\Phi}(t_{N,1})&\cdots&	\dot{\Phi}(t_{N,N})
			\end{bmatrix}\endgroup\!.
		\end{aligned}
	\end{equation*}
	Finally, by defining the matrices 
			\begin{align}
				Q_N^\tau (A_c,B_c) \!&= \! \begingroup\setlength\arraycolsep{0pt}\begin{bmatrix}
					\dot{\Phi}^\top(0) \otimes I & 0\\
					\Psi^\top\oplus(-A_c)&-I \otimes B_c\\
					\Phi^\top(\tau) \otimes I&0\\
					0 &\Phi^\top(\tau) \otimes I
				\end{bmatrix}\endgroup\!\!, \; T_1= \!\begin{bmatrix}
				0\\0\\I\\0
				\end{bmatrix}\!\!,\label{QRSV}\\
				R_N^\tau(A_c,B_c)\! &= \!\begingroup\setlength\arraycolsep{1.25pt}\begin{bmatrix}
					A_c-\dot{\phi}_0(0)I&B_c&0\\
					-\sigma^\top\otimes I&0&0\\
					- \phi_0(\tau)I&0&0\\
					0&\!\!\!-\phi_0(\tau)I&I
				\end{bmatrix}\endgroup\!\!,\; \
				T_2 = \begingroup\setlength\arraycolsep{2pt}\begin{bmatrix}
					I&0&0\\
					0&I&0
				\end{bmatrix}\endgroup,\nonumber
			\end{align}
we obtain the following characterization of system interpolation, which is a restatement of Theorem 1 in \cite{Pirastzehzad2025}.
\begin{proposition}\label{PropCharacterization}
	For a sampling time $\tau>0$ and an integer $N\geq 1$, $\bm{\Sigma}_c$ is an $N$-th order interpolator of $\bm{\Sigma}_d$ with respect to the sampling time $\tau$ if and only if 
	\begin{equation*}
		\begin{aligned}
			\operatorname{Im}\Big(R_N^ \tau(A_c,B_c) + T_1\begin{bmatrix}
				A_d&B_d
			\end{bmatrix} T_2\Big)\subset
			\operatorname{Im}Q_N^\tau (A_c,B_c).
		\end{aligned}
	\end{equation*} 
\end{proposition}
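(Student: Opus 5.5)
We reduce the interpolation property to a statement about a single sampling interval and then encode that statement as a linear-algebraic solvability condition. The reduction uses linearity and time-invariance of both systems. An $N$-th order interpolation is determined interval by interval. On $[k\tau,(k+1)\tau]_{\mathbb{R}}$ we need polynomials $P_k\in\restr{\mathbb{P}_N^m}{[0,\tau]_{\mathbb{R}}}$ for $u_c$ and $X_k\in\restr{\mathbb{P}_N^n}{[0,\tau]_{\mathbb{R}}}$ for $x_c$, with the following properties:
\begin{itemize}
\item $P_k(0)=u_d(k)$ and $P_k(\tau)=u_d(k+1)$;
\item $X_k(0)=x_d(k)$ and $X_k(\tau)=x_d(k+1)=A_dx_d(k)+B_du_d(k)$;
\item $\dot X_k = A_cX_k + B_cP_k$ on $[0,\tau]_{\mathbb{R}}$.
\end{itemize}
Gluing these pieces gives a continuous input and a continuous state. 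The state is the solution of \eqref{ContinuousSystem}, since it satisfies the ODE on each piece and matches at the junctions. So $\bm{\Sigma}_c$ is an interpolator if and only if the one-step problem is solvable for every triple $(x_d(k),u_d(k),u_d(k+1))\in\mathbb{R}^n\times\mathbb{R}^m\times\mathbb{R}^m$. This covers arbitrary $\ell$ and arbitrary input sequences.

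Necessity holds because any such triple can be realized as the first step of some admitted trajectory: take $\ell=1$ and $x_0$, $u_d(0)$, $u_d(1)$ free. The one-step restriction of the interpolation then solves the one-step problem.

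Next we parametrize the polynomials in the Lagrange basis \eqref{PhiFunction} at the nodes $t_{N,0}=0,\dots,t_{N,N}$. We write $X(t)=\phi_0(t)x_d(k)+(\Phi^\top(t)\otimes I)\xi$ and $P(t)=\phi_0(t)u_d(k)+(\Phi^\top(t)\otimes I)\mu$, where $\xi\in\mathbb{R}^{nN}$ and $\mu\in\mathbb{R}^{mN}$ are the unknown nodal values. This automatically enforces $X(0)=x_d(k)$ and $P(0)=u_d(k)$. Both sides of the ODE are polynomials of degree at most $N$, while $\dot X$ has degree at most $N-1$. So the ODE is equivalent to collocation at the $N+1$ nodes, \emph{provided} collocation at $N+1$ points identifies the degree-$N$ polynomial $A_cX+B_cP-\dot X$. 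It does: a degree-$\le N$ polynomial vanishing at $N+1$ distinct points is zero.

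Collocation at $t=0$ gives the first block row: $(\dot\Phi^\top(0)\otimes I)\xi=(A_c-\dot\phi_0(0)I)x_d(k)+B_cu_d(k)$. Collocation at $t_{N,1},\dots,t_{N,N}$ gives $(\Psi^\top\otimes I - I\otimes A_c)\xi-(I\otimes B_c)\mu=-(\sigma^\top\otimes I)x_d(k)$, which is exactly the $\Psi^\top\oplus(-A_c)$ row. Here one uses that $\phi_j(t_{N,i})=\delta_{ij}$, and we will have to check the Kronecker ordering convention for $\xi$ carefully. The end conditions read
\[
(\Phi^\top(\tau)\otimes I)\xi = A_dx_d(k)+B_du_d(k)-\phi_0(\tau)x_d(k),\qquad (\Phi^\top(\tau)\otimes I)\mu=u_d(k+1)-\phi_0(\tau)u_d(k).
\]

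Stacking these four block rows gives the linear system $Q_N^\tau(A_c,B_c)\,(\xi,\mu)=\big(R_N^\tau(A_c,B_c)+T_1[A_d\ B_d]T_2\big)(x_d(k),u_d(k),u_d(k+1))$. The $T_1[A_d\ B_d]T_2$ term inserts $A_dx_d(k)+B_du_d(k)$ into the third block row. Solvability for every right-hand-side vector is precisely the stated image inclusion, which proves both directions.

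The main obstacle is the bookkeeping: verifying that the block entries, especially the Kronecker-sum row and its ordering under $\operatorname{vec}$, match the collocation equations exactly. Note that the specific Legendre-based nodes are not needed for the equivalence; only their distinctness matters. Their role in \cite{Pirastzehzad2025} is presumably numerical conditioning.
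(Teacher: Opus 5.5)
Your proposal is correct and gives a complete proof. The paper does not prove Proposition~\ref{PropCharacterization} itself; it imports it as Theorem~1 of \cite{Pirastzehzad2025}. Your argument reconstructs exactly what $Q_N^\tau(A_c,B_c)$ and $R_N^\tau(A_c,B_c)$ are built to encode, in four steps:
\begin{enumerate}
\item reduce to a single interval, using gluing and uniqueness of ODE solutions;
\item parametrize each piece in the Lagrange basis with $t_{N,0}=0$;
\item use exactness of collocation, since the residual $A_cX+B_cP-\dot X$ has degree at most $N$ and vanishes at $N+1$ distinct nodes;
\item add the two endpoint rows at $\tau$, which is not a node because $t_{N,N}<\tau$.
\end{enumerate}

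The Kronecker bookkeeping you were worried about closes with the paper's column-stacking $\operatorname{vec}$. We have $(\Phi^\top(t)\otimes I)\operatorname{vec}(X^k)=X^k\Phi(t)$ and $(\sigma^\top\otimes I)x=\operatorname{vec}(x\sigma)$. Moreover, $(\Psi^\top\oplus(-A_c))\operatorname{vec}(X^k)-(I\otimes B_c)\operatorname{vec}(U^k)=\operatorname{vec}(X^k\Psi-A_cX^k-B_cU^k)$. Hence the $i$-th column of the second block row reads $\dot\phi_0(t_{N,i})x_d(k)+X^k\dot\Phi(t_{N,i})=A_cX^k_{:,i}+B_cU^k_{:,i}$, which is collocation at $t_{N,i}$.

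Compared with simply citing, your route shows that the equivalence does not use the Gauss--Radau choice of nodes. Phrase that claim as ``distinct nodes with $t_{N,0}=0$'' rather than ``only distinctness.'' The condition $t_{N,0}=0$ is what lets $\phi_0$ carry $x_d(k)$ and $u_d(k)$. It also produces the first block rows of $Q_N^\tau$ and $R_N^\tau$, with their $\dot\phi_0(0)$ term and the zero block multiplying $\operatorname{vec}(U^k)$.
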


Proposition~\ref{PropCharacterization} facilitates the characterization of all input-state trajectories that establish \eqref{InterpolatingCondition}. Suppose that $\bm{\Sigma}_c$ is an $N$-th order interpolator of $\bm{\Sigma}_d$ with respect to the sampling time $\tau$. Given an integer $\ell>0$, an initial condition $x_0\in\mathbb{R}^n$, and a control sequence $u_d:[0,\ell]_{\mathbb{Z}}\rightarrow\mathbb{R}^m$, for any integer $0\leq k\leq \ell-1$, it follows from Proposition~\ref{PropCharacterization} that there exist matrices $X^k\in\mathbb{R}^{n\times N}$ and $U^k\in\mathbb{R}^{m\times N}$ such that 
\begin{equation}\label{AlgorithmTechnical1}
	\begin{aligned}
		&Q_N^\tau (A_c,B_c) \begin{bmatrix}
			\operatorname{vec}(X^k)\\\operatorname{vec}(U^k)
		\end{bmatrix}\\
		& = \Big(R_N^ \tau(A_c,B_c) + T_1\begingroup 
		\setlength\arraycolsep{1pt}\begin{bmatrix}
			A_d&B_d
		\end{bmatrix}\endgroup T_2\Big) \begin{bmatrix}
			x_d(k;x_0,u_d)\\u_d(k)\\u_d(k+1)
		\end{bmatrix}.
	\end{aligned}
\end{equation}
Correspondingly, we define signals $\bar{u}_c^k\in\restr{\mathbb{P}_N^m}{[0,\tau]_{\mathbb{R}}}$ and $\bar{x}_c^k\in\restr{\mathbb{P}_N^n}{[0,\tau]_{\mathbb{R}}}$ such that for all $t\in[0,\tau]_{\mathbb{R}}$, 
\begin{equation}\label{AlgorithmTechnical2}
	\begin{aligned}
		\bar{u}_c^k(t) &= 
			u_d(k)\phi_0(t) + U^k\Phi(t),\\
		\bar{x}_c^k(t) &= 
			x_d(k;x_0,u_d)\phi_0(t)+X^k\Phi(t).
	\end{aligned}
\end{equation}
Considering that the matrices $U^k$ and $X^k$ are not necessarily unique, we collect all signals $\bar{u}_c^k$ and $\bar{x}_c^k$ defined by \eqref{AlgorithmTechnical2} in the set 
	\begin{align}
		\mathcal{T}_c^k(x_0,u_d) = \Big\{\!(\bar{u}_c^k,\bar{x}_c^k) \Big\vert  \bar{u}_c^k\in&\restr{\mathbb{P}_N^m}{[0,\tau]_{\mathbb{R}}}, \bar{x}_c^k\in \restr{\mathbb{P}_N^n}{[0,\tau]_{\mathbb{R}}}, \label{SetTci}\\
		&\quad \text{$\exists U^k,\!X^k$s.t. \eqref{AlgorithmTechnical1}--\eqref{AlgorithmTechnical2} hold}\! \Big\}.\nonumber
	\end{align}
Then, for each $k=0,1,\ldots,\ell-1$, we let $(\bar{u}_c^k,\bar{x}_c^k)\in\mathcal{T}_c^k(x_0,u_d)$ and define the continuous-time signals $\bar{u}_c:[0,\ell \tau]_{\mathbb{R}}\rightarrow\mathbb{R}^m$ and $\bar{x}_c:[0,\ell \tau]_{\mathbb{R}}\rightarrow\mathbb{R}^n$ such that 
\begin{equation}\label{Stepwise}
	\begin{aligned}
		\forall t \in [0,\tau]: \;  \bar{u}_c(k\tau + t) = \bar{u}_c^k(t),\; \bar{x}_c(k\tau + t) = \bar{x}_c^k(t).
	\end{aligned}
\end{equation}
We finally collect all these continuous-time signals in the set 
\begin{align}
	&\mathcal{T}_c(x_0,u_d) = \Big\{\!(\bar{u}_c,\bar{x}_c) \big\vert\bar{u}_c\!:\![0,\ell \tau]_{\mathbb{R}}\rightarrow\mathbb{R}^m\!,\bar{x}_c\!:\![0,\ell \tau]_{\mathbb{R}}\rightarrow\mathbb{R}^n,\nonumber\\
	&\forall k\!\in\![0,\ell-1]_{\mathbb{Z}}, \text{$\exists (\bar{u}_c^k,\bar{x}_c^k)\!\in\!\mathcal{T}_c^k(x_0,u_d)\!$ s.t. \!\eqref{Stepwise} holds}\!\Big\},\label{SetTc}
\end{align}
based on which we obtain the following result, which is an immediate consequence of \cite[Theorem 2]{Pirastzehzad2025}. 
\begin{theorem}\label{PropTrajectorySetCharacterization}
	For a sampling time $\tau>0$ and an integer $N\geq 1$, suppose that $\bm{\Sigma}_c$ is an $N$-th order interpolator of $\bm{\Sigma}_d$ with respect to the sampling time $\tau$. Then, for any integer $\ell>0$, any initial condition $x_0\in\mathbb{R}^n$, and any discrete-time input $u_d:[0,\ell]_{\mathbb{Z}} \rightarrow \mathbb{R}^m$, a continuous-time input $u_c\in\mathbb{I}_N^\tau (u_d)$ establishes \eqref{InterpolatingCondition} if and only if $\big(\restr{u_c,x_c(\cdot;x_0,u_c)}{[0,\ell \tau]_{\mathbb{R}}}\big) \in \mathcal{T}_c(x_0,u_d)$.
\end{theorem}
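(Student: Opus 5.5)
The plan is to reduce everything to a single sampling interval and then invoke the characterization behind Proposition~\ref{PropCharacterization}, i.e.\ the interval-wise content of \cite[Theorem 2]{Pirastzehzad2025}. Fix $\ell$, $x_0$ and $u_d$, and write $x_d(k)=x_d(k;x_0,u_d)$. Both $u_c\in\mathbb{I}_N^\tau(u_d)$ and the interpolating condition \eqref{InterpolatingCondition} are statements about the restrictions of $u_c$ and $x_c$ to the intervals $[k\tau,(k+1)\tau]_{\mathbb{R}}$. On each such interval the signals must be polynomials of degree at most $N$ that match the samples at both endpoints. The set $\mathcal{T}_c(x_0,u_d)$ in \eqref{SetTc} is likewise defined piece by piece via \eqref{Stepwise}. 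It therefore suffices to prove, for each fixed $k$, the following claim: a pair $(\bar u^k,\bar x^k)$, where $\bar u^k(t)=u_c(k\tau+t)$ and $\bar x^k(t)=x_c(k\tau+t;x_0,u_c)$, consists of $N$-th order polynomials that interpolate the endpoint samples and satisfy $\dot{\bar x}^k=A_c\bar x^k+B_c\bar u^k$ if and only if $(\bar u^k,\bar x^k)\in\mathcal{T}_c^k(x_0,u_d)$.

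\emph{Parametrization.} Every polynomial in $\mathbb{P}_N$ on $[0,\tau]_{\mathbb{R}}$ is uniquely determined by its values at the $N+1$ distinct nodes $t_{N,0},\dots,t_{N,N}$, since $\phi_0,\dots,\phi_N$ is the Lagrange basis. Because $t_{N,0}=0$, the conditions $\bar x^k(0)=x_d(k)$ and $\bar u^k(0)=u_d(k)$ force exactly the representation \eqref{AlgorithmTechnical2}. The matrices $X^k$ and $U^k$ are then the collections of the node values at $t_{N,1},\dots,t_{N,N}$. Next, I would read off the block rows of \eqref{AlgorithmTechnical1}:
\begin{itemize}
\item Row 1 is the ODE residual evaluated at $t=0$.
\item Row 2 collects the ODE residuals at the nodes $t_{N,1},\dots,t_{N,N}$; this is where $\Psi^\top\oplus(-A_c)$ and $\sigma$ enter.
\item Rows 3 and 4 say $\bar x^k(\tau)=A_dx_d(k)+B_du_d(k)=x_d(k+1)$ and $\bar u^k(\tau)=u_d(k+1)$.
\end{itemize}
Thus \eqref{AlgorithmTechnical1} is equivalent to three things: the endpoint interpolation conditions, plus vanishing of the residual $r(t)=\dot{\bar x}^k(t)-A_c\bar x^k(t)-B_c\bar u^k(t)$ at all $N+1$ nodes.

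\emph{Necessity.} If $u_c$ establishes \eqref{InterpolatingCondition}, then $\bar x^k$ and $\bar u^k$ are polynomials of degree at most $N$ satisfying the ODE exactly. Hence $r\equiv 0$, in particular at the nodes, and the endpoint conditions hold. So \eqref{AlgorithmTechnical1} holds with these $X^k,U^k$, which gives $(\bar u^k,\bar x^k)\in\mathcal{T}^k_c$.

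\emph{Sufficiency.} This is the main obstacle. The residual $r$ has degree up to $N$, because $\dot{\bar x}^k$ has degree $N-1$ while $A_c\bar x^k+B_c\bar u^k$ has degree $N$. A degree-$N$ polynomial vanishing at $N+1$ distinct points is identically zero. So \eqref{AlgorithmTechnical1} forces $\dot{\bar x}^k=A_c\bar x^k+B_c\bar u^k$ on all of $[0,\tau]_{\mathbb{R}}$. I would double-check that the Gauss--Radau-type node choice places no further constraint here; it matters only for the existence question settled in Proposition~\ref{PropCharacterization}.

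\emph{Gluing.} The pieces agree at the junctions $k\tau$, since both sides equal $x_d(k)$, respectively $u_d(k)$. Hence the concatenated $\bar x_c$ is continuous and piecewise satisfies the ODE with input $u_c$, and by uniqueness of solutions it coincides with $x_c(\cdot;x_0,u_c)$ on $[0,\ell\tau]_{\mathbb{R}}$, proceeding by induction on $k$. This yields both directions of the equivalence simultaneously. In the sufficiency direction, $u_c\in\mathbb{I}_N^\tau(u_d)$ and the membership in $\mathcal{T}_c$ together give \eqref{InterpolatingCondition}.
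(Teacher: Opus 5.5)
Your argument is correct. The paper gives no proof of its own: it presents the statement as an immediate consequence of \cite[Theorem 2]{Pirastzehzad2025}. You instead give a self-contained collocation argument. You read the block rows of \eqref{AlgorithmTechnical1} as the vanishing of the residual $\dot{\bar x}^k-A_c\bar x^k-B_c\bar u^k$ at the $N+1$ nodes, together with the endpoint conditions at $\tau$. Then you use that this residual has degree at most $N$, so it vanishes identically.

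This buys two things the citation hides. First, it shows that neither the interpolator hypothesis nor the Gauss--Radau placement of the nodes is needed for the equivalence itself. They matter only for nonemptiness of $\mathcal{T}_c^k(x_0,u_d)$, i.e.\ for Proposition~\ref{PropCharacterization}.

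Second, your per-interval claim is stronger than the statement, and you misplace the difficulty by calling sufficiency the main obstacle. For the theorem as written, the ``if'' direction is immediate, because the pair contains the true state $x_c(\cdot;x_0,u_c)$:
\begin{itemize}
\item membership in $\mathcal{T}_c(x_0,u_d)$ already gives the piecewise polynomial form;
\item $\phi_0(0)=1$ and $\Phi(0)=0$ give $x_c(k\tau)=x_d(k)$;
\item row 3 gives $x_c((k+1)\tau)=x_d(k+1)$.
\end{itemize}
No residual argument is needed there.

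What your residual argument and gluing step actually establish is that every element of $\mathcal{T}_c(x_0,u_d)$ is a genuine input--state trajectory; equivalently, $X^k$ is determined by $U^k$. This is precisely what the paper silently uses in the proof of Theorem~\ref{Th_Implication} to pass from \eqref{MidU} to \eqref{MidX}. That part of your proposal is worth keeping even though the stated theorem does not require it.
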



Having characterized system interpolation and the set of all input-state trajectories that establish \eqref{InterpolatingCondition}, we now conduct system discretization (see \cite[Theorem 3]{Pirastzehzad2025} for further details).
\begin{theorem}\label{Prop_Design}
	Given a continuous-time system $\bm{\Sigma}_c$, for a sampling time $\tau>0$ and an integer $N\geq 1$, there exist matrices $A_d$ and $B_d$ such that $\bm{\Sigma}_c$ is an $N$-th order interpolator of $\bm{\Sigma}_d$ with respect to the sampling time $\tau$ if and only if there exist vectors $v_1$ and $v_2$ such that 
	\begin{equation*}
		\begin{bmatrix}
			I\otimes Q_N^\tau (A_c,B_c) &-T_2^\top \otimes T_1
		\end{bmatrix}\begin{bmatrix}
			v_1\\
			v_2\\
		\end{bmatrix} = \operatorname{vec}\big( R_N^\tau(A_c,B_c) \big),
	\end{equation*}
	where $Q_N^\tau (A_c,B_c)$, $R_N^\tau(A_c,B_c)$, $T_1$, and $T_2$ are given by \eqref{QRSV}. For any such vectors $v_1$ and $v_2$, the matrices $A_d$ and $B_d$ are recovered from $v_1 = \operatorname{vec}([A_d \; B_d])$.
\end{theorem}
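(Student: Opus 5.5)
The plan is to reduce the statement to Proposition~\ref{PropCharacterization} and then linearize the resulting image-inclusion condition with respect to the unknown matrix $[A_d\;B_d]$.

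\textbf{Step 1: from image inclusion to a matrix equation.} By Proposition~\ref{PropCharacterization}, $\bm{\Sigma}_c$ is an $N$-th order interpolator of $\bm{\Sigma}_d$ exactly when
\[
\operatorname{Im}\big(R_N^\tau(A_c,B_c)+T_1[A_d\;B_d]T_2\big)\subset\operatorname{Im}Q_N^\tau(A_c,B_c).
\]
The first observation is the elementary fact that $\operatorname{Im}M\subset\operatorname{Im}Q$ if and only if there is a matrix $V$ with $QV=M$. Indeed, each column of $M$ must be a combination of the columns of $Q$; stacking the corresponding coefficient vectors gives $V$. The converse is immediate. Hence the existence of $A_d,B_d$ making $\bm{\Sigma}_c$ an interpolator is equivalent to the existence of matrices $V$ and $W=[A_d\;B_d]$ such that
\[
Q_N^\tau(A_c,B_c)\,V - T_1 W T_2 = R_N^\tau(A_c,B_c).
\]
This equation is linear in the pair $(V,W)$.

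\textbf{Step 2: vectorization.} Next I apply $\operatorname{vec}$ together with the identity $\operatorname{vec}(AXB)=(B^\top\otimes A)\operatorname{vec}(X)$:
\begin{itemize}
\item the term $QV=QVI$ becomes $(I\otimes Q)\operatorname{vec}(V)$;
\item the term $T_1WT_2$ becomes $(T_2^\top\otimes T_1)\operatorname{vec}(W)$.
\end{itemize}
Setting $v_1=\operatorname{vec}(V)$ and $v_2=\operatorname{vec}(W)$ produces exactly the block linear system in the statement. Since $\operatorname{vec}$ is a bijection, solvability of the matrix equation is equivalent to solvability of this vector equation, which gives both directions of the "if and only if."

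\textbf{Step 3: recovering $A_d,B_d$.} Any solution of the vector equation yields $W=[A_d\;B_d]$ by un-vectorizing the component that multiplies $T_2^\top\otimes T_1$. For that choice, the matrix equation holds, so the image inclusion holds, and Proposition~\ref{PropCharacterization} gives the interpolation property.

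\textbf{Main obstacle: index bookkeeping.} The step needing the most care is aligning the roles of $v_1$ and $v_2$ with the block ordering $[\,I\otimes Q\;\; -T_2^\top\otimes T_1\,]$. Read as written, the block structure puts $\operatorname{vec}([A_d\;B_d])$ in the second slot, $v_2$, while $v_1$ carries $\operatorname{vec}(V)$. Yet the statement says $[A_d\;B_d]$ is recovered from $v_1$. I would resolve this by taking the recovery from the block multiplying $T_2^\top\otimes T_1$ (i.e. $v_2$), treating the statement's labeling as a relabeling of the unknowns.

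Alongside this, I would check that the dimensions are consistent:
\begin{itemize}
\item $I$ in $I\otimes Q$ has size equal to the number of columns of $R$, namely $2n+m$ for the $(x,u,u')$ blocks, i.e.\ $n+2m$ columns in the layout of $R$;
\item $T_2$ has that same number of columns, so both Kronecker blocks have the same row count as $\operatorname{vec}(R)$.
\end{itemize}
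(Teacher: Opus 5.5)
Your argument is correct and is essentially the intended one. The paper gives no proof of its own and defers to \cite[Theorem 3]{Pirastzehzad2025}; that result is exactly Proposition~\ref{PropCharacterization} rewritten as $Q_N^\tau(A_c,B_c)V = R_N^\tau(A_c,B_c) + T_1[A_d\;B_d]T_2$ and then vectorized. You are also right that $\operatorname{vec}([A_d\;B_d])$ must be $v_2$, not $v_1$ as printed, since $v_1$ has $(n+2m)(n+m)N$ entries while $[A_d\;B_d]$ has only $n(n+m)$. The one slip is in your dimension check: the identity in $I\otimes Q_N^\tau(A_c,B_c)$ is $I_{n+2m}$, because the column blocks of $R_N^\tau$ have sizes $n$, $m$, $m$, so it is not ``$2n+m$''.
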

Theorem~\ref{Prop_Design} allows us to construct the discrete-system $\bm{\Sigma}_d$ by solving a linear matrix equation, whose solution gives the matrices $A_d$ and $B_d$. 
\section{From Continuous-Time to Discrete-Time Implementation}\label{Sec_Translation}
We now employ the discretization techniques discussed in the previous section to translate the contract implementation problem from continuous to discrete time. Given a continuous-time system $\bm{\Sigma}_c$ and a contract $\mathcal{C}_c$ that satisfies Assumption~ \ref{Assumption}, we select the sampling time $\tau$ as in \eqref{SamplingTime} and, accordingly, construct the discrete-time contract $\mathcal{C}_d = (\mathcal{A}_d,\mathcal{G}_d)$ as in \eqref{SampledContract}. Then, for a polynomial degree $N\geq 1$, we utilize Theorem~\ref{Prop_Design} to construct a discrete-time system $\bm{\Sigma}_d$ such that $\bm{\Sigma}_c$ is an $N$-th order interpolator of $\bm{\Sigma}_d$ with respect to $\tau$. Based on these discrete-time system $\bm{\Sigma}_d$ and contract $\mathcal{C}_d$, we will derive a condition that ensures the implication \eqref{ContractImplication}.

For an initial condition $x_0\in\mathbb{R}^n$, suppose that $\bm{\Sigma}_d \models \mathcal{C}_d$. This, by Definition~\ref{Def_Implementation}, implies the existence of a control sequence $u_d\in\operatorname{sig}(\mathcal{A}_d)$ such that $\restr{x_d(\cdot;x_0,u_d)}{[0,\frac{T}{\tau}]_{\mathbb{Z}}} \in \operatorname{sig}(\mathcal{G}_d)$. Choosing a continuous-time control input $u_c\in\mathbb{I}_N^\tau(u_d)$ that yields \eqref{InterpolatingCondition}, we immediately conclude from Theorem~\ref{PropTrajectorySetCharacterization} that $(u_c,\restr{x_c(\cdot;x_0,u_c)}{[0,T]_{\mathbb{R}}}) \in \mathcal{T}_c(x_0,u_d)$. It thus follows from \eqref{SetTc} that for all $k=0,1,\ldots,\frac{T}{\tau}-1$, there exist matrices $X^k$ and $U^k$ such that \eqref{AlgorithmTechnical1} holds and
	\begin{align}
		 u_c(k\tau+t) &= u_d(k)\phi_0(t) + U^k\Phi(t), \label{SpectralRep}\\
		x_c(k\tau+t; x_0,u_c) &= x_d(k;x_0,u_d)\phi_0(t)+X^k\Phi(t),\nonumber
	\end{align}
	for all $t\in[0,\tau]_{\mathbb{R}}$. We exploit the representation \eqref{SpectralRep} to derive a condition which guarantees that $u_c(k\tau+t) \in \mathcal{A}_c(k\tau+t)$ and $x_c(k\tau+t; x_0,u_c) \in \mathcal{G}_c(k\tau+t)$ for all $k = 0,1,\ldots, \frac{T}{\tau}-1$ and all $t\in[0,\tau]_{\mathbb{R}}$. This then indicates that $u_c\in\operatorname{sig}(\mathcal{A}_c)$ and that \eqref{ContinuousTimeCondition} holds, which, by Definition~\ref{Def_Implementation}, implies that $\bm{\Sigma}_c\models\mathcal{C}_c$. 

We derive this condition by making use of the so-called \emph{Bernstein basis polynomials} \cite{lorentz2012bernstein}, which are defined as
\begin{equation*}
	\mathfrak{b}_{j,N}(t) = \frac{N!}{j!(N-j)!}t^j(1-t)^{N-j}, \quad j=0,1,\ldots,N.
\end{equation*} 
These polynomials are non-negative over the interval $[0,1]_{\mathbb{R}}$ (\textit{i.e.,} $\mathfrak{b}_{j,N}(t) \geq 0$ for all $t\in[0,1]_{\mathbb{R}}$) and they form a partition of unity, since $\sum_{j=0}^{N}\mathfrak{b}_{j,N}(t) = \big(t + (1-t)\big)^N = 1$. Importantly, the Bernstein basis polynomials are linearly independent over the interval $[0,1]_{\mathbb{R}}$ and, therefore, form a basis for the space of polynomials  up to degree $N$ over this interval.

We now define the vector-valued polynomial function $\mathfrak{B}:[0,1]_{\mathbb{R}}\rightarrow\mathbb{R}^{N+1}$ such that
\begin{equation*}
	\forall t\in[0,1]_{\mathbb{R}}: \;\; \mathfrak{B}(t) = \begin{bmatrix}
		\mathfrak{b}_{0,N}(t)&\mathfrak{b}_{1,N}(t)&\cdots&\mathfrak{b}_{N,N}(t)
	\end{bmatrix}^\top.
\end{equation*}
Bearing in mind that Bernstein basis polynomials form a basis for the polynomial space (up to degree $N$) over the interval $[0,1]_{\mathbb{R}}$, we immediately conclude from \eqref{SpectralRep} that for any $k=0,1,\ldots,\frac{T}{\tau}-1$, there exist matrices $V^k\in\mathbb{R}^{m\times (N+1)}$ and $W^k\in\mathbb{R}^{n\times (N+1)}$ such that 
\begin{equation}\label{BersnteinRepresentation}
	\begin{aligned}
		u_c(k\tau+t) &= V^k\mathfrak{B}\left(\frac{t}{\tau}\right), x_c(k\tau+t;x_0,u_c) = W^k\mathfrak{B}\left(\frac{t}{\tau}\right),
	\end{aligned}
\end{equation}
for all $t\in[0,\tau]_{\mathbb{R}}$. Then, by recalling that Bernstein basis polynomials are non-negative and that they form a partition of unity, we observe that $u_c(k\tau+t)$ and $x_c(k\tau+t;x_0,u_c)$ can be written as \emph{convex} combinations of the columns in matrices $V^ k$ and $W^ k$, respectively. Considering that for all $k = 0,1,\ldots, \frac{T}{\tau}-1$ and all $t\in[0,\tau]_{\mathbb{R}}$, the sets $\mathcal{A}_c(k\tau+t)$ and $\mathcal{G}_c(k\tau+t)$ are convex, we can utilize these matrices $V^ k$ and $W^k$ to ensure that $u_c\in\operatorname{sig}(\mathcal{A}_c)$ and that \eqref{ContinuousTimeCondition} holds. 

After constructing the matrix 
\begin{equation}\label{MMatrix}
	\mathfrak{M} = \begin{bmatrix}
		\mathfrak{B}\left(\frac{t_{N,0}}{\tau}\right)&\mathfrak{B}\left(\frac{t_{N,1}}{\tau}\right)&\cdots&\mathfrak{B}\left(\frac{t_{N,N}}{\tau}\right)
	\end{bmatrix},
\end{equation}
we note from Lemma~\ref{TechnicalLemma} in Appendix~\ref{Sec_TechnicalLemma} that $\mathfrak{M}$ is invertible. This, together with \eqref{SpectralRep} and \eqref{BersnteinRepresentation}, immediately gives  $V^ k = [u_d(k)\;\;U^k]\mathfrak{M}^{-1}$ and $W^k =[x_d(k;x_0,u_d)\;\; X^k]\mathfrak{M}^{-1}$.
Based on this, we now obtain the condition that guarantees $\bm{\Sigma}_c\models\mathcal{C}_c$, see Appendix~\ref{ProofTh_Implication} for the proof. 
\begin{theorem}\label{Th_Implication}
	Consider the continuous-time system $\bm{\Sigma}_c$ and let the contract $\mathcal{C}_c = (\mathcal{A}_c,\mathcal{G}_c)$ be such that Assumption~\ref{Assumption} holds. For the sampling time \eqref{SamplingTime}, let the contract $\mathcal{C}_d = (\mathcal{A}_d,\mathcal{G}_d)$ be given by \eqref{SampledContract}. For a polynomial degree $N\geq 1$, let the discrete-time system $\bm{\Sigma}_d$ be such that $\bm{\Sigma}_c$ is an $N$-th order interpolator of $\bm{\Sigma}_d$ with respect to $\tau$. For an initial condition $x_0\in\mathbb{R}^n$, the system $\bm{\Sigma}_c$ implements the contract $\mathcal{C}_c$ if there exists a control sequence $u_d\in\operatorname{sig}(\mathcal{A}_d)$ such that
	\begin{equation}\label{DiscCond}
		\restr{x_d(\cdot;x_0,u_d)}{[0,\frac{T}{\tau}]_{\mathbb{Z}}} \in \operatorname{sig}(\mathcal{G}_d),
	\end{equation}
	 and for all $k\in[0,\frac{T}{\tau}-1]_{\mathbb{Z}}$, we have that 
	 \begin{equation}\label{InclusionCond}
	 \begin{aligned}
	 		\operatorname{col}\left(\begin{bmatrix}
	 		u_d(k)&U^k
	 	\end{bmatrix}\mathfrak{M}^{-1}\right) &\subset \mathcal{A}_d(k), \\
	 	 \operatorname{col}\left(\begin{bmatrix}
	 		x_d(k;x_0,u_d)&X^k
	 	\end{bmatrix}\mathfrak{M}^{-1}\right) &\subset \mathcal{G}_d(k),
	 \end{aligned}
	 \end{equation}
	where $U^k$ and $X^k$ are given by \eqref{AlgorithmTechnical1}. For any such sequence $u_d$, a continuous-time control input $u_c\in\operatorname{sig}(\mathcal{A}_c)$ that establishes \eqref{ContinuousTimeCondition} is given by 
		\begin{equation}\label{MidU}
			u_c(k\tau+t) = u_d(k)\phi_0(t) + U^k\Phi(t).
	   \end{equation}
	for all $k\in[0,\frac{T}{\tau}-1]_{\mathbb{Z}}$ and all $t\in[0,\tau]_{\mathbb{R}}$.
\end{theorem}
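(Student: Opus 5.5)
The plan is to verify Definition~\ref{Def_Implementation} directly for the input \eqref{MidU}. We show $u_c(t)\in\mathcal{A}_c(t)$ and $x_c(t;x_0,u_c)\in\mathcal{G}_c(t)$ for every $t\in[0,T]_{\mathbb{R}}$, splitting $[0,T]_{\mathbb{R}}$ into the intervals $[k\tau,(k+1)\tau]_{\mathbb{R}}$, $k\in[0,\frac{T}{\tau}-1]_{\mathbb{Z}}$. By \eqref{SamplingTime}, $\frac{T}{\tau}=\ell_d$ is an integer, so these intervals cover $[0,T]_{\mathbb{R}}$.

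\textbf{Identify the trajectory.} Since $\bm{\Sigma}_c$ is an $N$-th order interpolator of $\bm{\Sigma}_d$, Theorem~\ref{PropTrajectorySetCharacterization} applies. I pick, for each $k$, the matrices $(U^k,X^k)$ solving \eqref{AlgorithmTechnical1} that are used in \eqref{InclusionCond}, and define $u_c$ by \eqref{MidU}.

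I need $u_c\in\mathbb{I}_N^\tau(u_d)$. This uses $\phi_0(0)=1$ and $\Phi(0)=0$, so $u_c(k\tau)=u_d(k)$. The rows of \eqref{AlgorithmTechnical1} involving $\Phi^\top(\tau)\otimes I$ encode $u_c^k(\tau)=u_d(k+1)$, and likewise for the state, $x_c^k(\tau)=A_dx_d(k)+B_du_d(k)=x_d(k+1)$. Hence consecutive pieces agree at the nodes and the pair is well defined.

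The remaining rows of \eqref{AlgorithmTechnical1} are the collocation equations $\dot{\bar x}_c^k(t_{N,i})=A_c\bar x_c^k(t_{N,i})+B_c\bar u_c^k(t_{N,i})$ at all $N+1$ nodes. Both sides are polynomials of degree at most $N$, and a degree-$N$ polynomial vanishing at $N+1$ distinct nodes is zero. So $\bar x_c^k$ solves \eqref{ContinuousSystem} exactly on $[0,\tau]_{\mathbb{R}}$ with input $\bar u_c^k$ and initial value $x_d(k)$. By uniqueness of solutions and induction over $k$ from $x_0$, $x_c(k\tau+t;x_0,u_c)=\bar x_c^k(t)$. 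This is precisely the assertion of Theorem~\ref{PropTrajectorySetCharacterization}, so I may simply invoke it to get \eqref{SpectralRep}.

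\textbf{Change to the Bernstein basis.} From \eqref{SpectralRep} and \eqref{BersnteinRepresentation} we have $V^k=[u_d(k)\;U^k]\mathfrak{M}^{-1}$ and $W^k=[x_d(k)\;X^k]\mathfrak{M}^{-1}$. To justify this, evaluate both representations at the nodes $t_{N,i}$. Since $\phi_i(t_{N,j})=\delta_{ij}$, the spectral form gives $[u_d(k)\;U^k]$ at the nodes. The Bernstein form gives $V^k\mathfrak{M}$ there. Because two degree-$N$ polynomials that agree at $N+1$ nodes coincide, $V^k\mathfrak{M}=[u_d(k)\;U^k]$, and invertibility of $\mathfrak{M}$ (Lemma~\ref{TechnicalLemma}) yields the formula for $V^k$; the same argument gives $W^k$.

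\textbf{Convexity.} Fix $t\in[0,\tau]_{\mathbb{R}}$. Then $u_c(k\tau+t)=\sum_j \mathfrak{b}_{j,N}(t/\tau)\,v_j$, where the $v_j$ are the columns of $V^k$, the weights are nonnegative, and they sum to one. By \eqref{InclusionCond}, each $v_j\in\mathcal{A}_d(k)$. The set $\mathcal{A}_d(k)$ is convex, being an intersection of convex sets, so $u_c(k\tau+t)\in\mathcal{A}_d(k)$. By \eqref{MidCondition}, $\mathcal{A}_d(k)\subset\mathcal{A}_c(k\tau+t)$. The state is handled identically with $W^k$ and $\mathcal{G}_d(k)$.

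Doing this for all $k$ gives $u_c\in\operatorname{sig}(\mathcal{A}_c)$ and \eqref{ContinuousTimeCondition}, hence $\bm{\Sigma}_c\models\mathcal{C}_c$. Condition \eqref{DiscCond} is consistent with this but is essentially subsumed by the interval inclusions; the terminal instant $k=\ell_d$ is handled by \eqref{TerminalCondition} and the column $x_d(\ell_d)$.

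\textbf{Main obstacle.} The delicate step is the first one: extracting from the block rows of $Q_N^\tau$ and $R_N^\tau$ that the pieces glue continuously and solve the ODE exactly. I plan to cite Theorem~\ref{PropTrajectorySetCharacterization} rather than reverify it. The only care needed is to check that the specific $(U^k,X^k)$ appearing in \eqref{InclusionCond} produce a member of $\mathcal{T}_c(x_0,u_d)$. This holds by the definition \eqref{SetTc}, since any solution of \eqref{AlgorithmTechnical1} qualifies.
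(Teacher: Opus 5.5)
Your proposal is correct and follows the paper's proof: you define $u_c$ by \eqref{MidU}, identify the state on each interval as $x_d(k;x_0,u_d)\phi_0(t)+X^k\Phi(t)$, pass to the Bernstein basis via $V^k=[u_d(k)\;U^k]\mathfrak{M}^{-1}$, and conclude from convexity of $\mathcal{A}_d(k)$ and $\mathcal{G}_d(k)$ together with \eqref{MidCondition}. Your direct collocation-plus-uniqueness argument for the trajectory identity (residual vanishing at all $N+1$ nodes, endpoint rows giving the gluing) fills in the step the paper simply attributes to Theorem~\ref{PropTrajectorySetCharacterization}.
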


By establishing the implication \eqref{ContractImplication}, Theorem~\ref{Th_Implication} characterizes the continuous-time contract implementation problem in terms of a discrete-time one. This basically allows for the implementation of a continuous-time contract in discrete time rather than in continuous time. In fact, it follows from Theorem~\ref{Th_Implication} that in order to verify that $\bm{\Sigma}_c\models\mathcal{C}_c$, it suffices to ensure that $\bm{\Sigma}_d\models\mathcal{C}_d$ with a control sequence that satisfies \eqref{InclusionCond}, which can be accomplished by the many existing discrete-time schemes. For a solution obtained by any of these methods (\textit{i.e.,} for any discrete-time control that satisfies \eqref{DiscCond} and \eqref{InclusionCond}), Theorem~\ref{Th_Implication} also gives the continuous-time control input subject to which $\bm{\Sigma}_c$ implements $\mathcal{C}_c$. 

\begin{remark}
	Theorem~\ref{Th_Implication} characterizes the continuous-time input $u_c\in\operatorname{sig}(\mathcal{A}_c)$ that establishes \eqref{ContinuousTimeCondition} in terms of a discrete-time input $u_d\in\operatorname{sig}(\mathcal{A}_d)$ that satisfies \eqref{DiscCond} and \eqref{InclusionCond}. The existence of such discrete-time input, however, depends on the choice of the sampling time $\tau$ and the polynomial degree $N$. Intuitively, it follows from \eqref{SamplingTime} and \eqref{SampledContract} that contract discretization with respect to a smaller sampling time $\tau$ results in the construction of discrete-time contracts whose assumptions are more permissive (\textit{i.e.,} they impose fewer limitations on the control input), whereas their guarantees are less strict (\textit{i.e.,} they prescribe larger safe sets). On the other hand, it follows from \eqref{AlgorithmTechnical1} that for a smaller polynomial degree $N$, the constraint \eqref{InclusionCond} is less restrictive in the sense that it entails fewer inclusion conditions. We therefore observe that smaller choices of $\tau$ and $N$ improve the feasibility of the problem. 
\end{remark}
\section{Numerical Example}\label{Sec_NumericalExample}
We consider a robotic agent $\bm{\Sigma}_c$ whose planar motion is described by 
\begin{equation*}
	\begin{aligned}
		\dot{x}(t)&=v_x(t), &\dot{y}(t)&=v_y(t)\\
		\dot{v}_x(t) &= F_{x},&
		\dot{v}_y(t) &= F_{y},
	\end{aligned}
\end{equation*}
where the variables $x$, $v_x$, and $F_{x}$ represent displacement, velocity, and acting force in the longitudinal direction, whereas the variables $y$, $v_y$, and $F_{y}$ represent the corresponding quantities in the lateral direction. We suppose that the robot is initially at rest and located at $(1,1)$, \textit{i.e.,} we have that $x(0) = 1$, $v_x(0) = 0$, $y(0)=1$, and $v_y(0) = 0$. 

We now consider the continuous-time contract $\mathcal{C}_c = (\mathcal{A}_c,\mathcal{G}_c)$ such that 
\begin{equation*}
	\begin{aligned}
		\mathcal{A}_c(t) &= \{(F_{x},F_{y}) \vert F_x,F_y\in[-5,5]_{\mathbb{R}}\}, &0\leq t\leq 5,\\
		\mathcal{G}_c(t) &= \mathcal{X}(t)\times \mathcal{V}(t), &0\leq t\leq 5,
	\end{aligned}
\end{equation*}
where 
\begin{equation*}
	\begin{aligned}
		\mathcal{X}(t)\! &=\! \left\{\begin{aligned}
			&\!\{(x,y)\vert x\!\in\![0,1.5]_{\mathbb{R}}, y\!\in\![0,1.5]_{\mathbb{R}}\}, &0\leq t< 1,\\
			&\!\{(x,y)\vert x\!\in\![0.5, 2]_{\mathbb{R}}, y\!\in\![0.5,1.5]_{\mathbb{R}}\},  &1\leq t< 2,\\
			&\!\{(x,y)\vert x\!\in\![1,2.5]_{\mathbb{R}}, y\!\in\![-1,3]_{\mathbb{R}}\},   &2\leq t< 3,\\
			&\!\{(x,y)\vert x\!\in\![-1,1.5]_{\mathbb{R}}, y\!\in\![-1,0.75]_{\mathbb{R}}\},  \!\!\!&3\leq t< 4,\\
			&\!\{(x,y)\vert x\!\in\![-2,1.25]_{\mathbb{R}},y\!\in\![-2,0]_{\mathbb{R}}\},  &4\leq t\leq 5,
		\end{aligned}\right.\\
	\mathcal{V}(t) \!&= \! \{(v_x,v_y)\vert v_x\in[-2,2]_{\mathbb{R}},v_y\in[-2,2]_{\mathbb{R}}\}, \quad \!0\leq t\leq 5.	
	\end{aligned}
\end{equation*}
We note that the contract $\mathcal{C}_c$ satisfies the Assumption~\ref{Assumption}. 

Our objective is to establish $\bm{\Sigma}_c\models\mathcal{C}_c$. We accomplish this by making use of the proposed vertical contract implementation framework.

 By defining $x_c = (x,y,v_x,v_y)^\top$ and $u_c = (F_x,F_y)^\top$, we formulate $\bm{\Sigma}_c$ as \eqref{ContinuousSystem}. We now discretize the contract $\mathcal{C}_c$ and the system $\bm{\Sigma}_c$ according to the procedure described in Section~\ref{Sec_Discretization}. For this purpose, we recall that $\mathcal{C}_c$ satisfies the Assumption~\ref{Assumption} and conclude that \eqref{Smoothness} holds. After observing that $\munderbar{r}_T = 3$, we obtain $r_c = \min_{t\in[0,2]_{\mathbb{R}}}r_t = 3$. Bearing in mind that $\lceil \frac{T}{r_c} \rceil = 2$, we take $\ell_d = 5$ and accordingly use \eqref{SamplingTime} to select the sampling time $\tau = 1$. We then construct the discrete-time contract $\mathcal{C}_d = (\mathcal{A}_d,\mathcal{G}_d)$ as in \eqref{SampledContract}.
 We subsequently discretize the system $\bm{\Sigma}_c$ by choosing $N = 5$ and utilizing Theorem~\ref{Prop_Design} to construct the discrete-time system $\bm{\Sigma}_d$ such that $\bm{\Sigma}_c$ is an $N$-th order interpolator ot $\bm{\Sigma}_d$ with respect to $\tau$. 
 
 Having constructed the discrete-time contract $\mathcal{C}_d$ and the system $\bm{\Sigma}_d$, we proceed with translating the continuous-time implementation problem into the discrete-time one. We therefore apply Theorem~\ref{Th_Implication} to construct a control sequence $u_d\in\operatorname{sig}(\mathcal{A}_d)$ such that \eqref{DiscCond} and \eqref{InclusionCond} hold for all $k\in[0,\frac{T}{\tau}-1]_{\mathbb{Z}}$. For this purpose, we adopt the approach taken in \cite{belta2019formal} to encode characterize such control sequence $u_d$ as the solution of a linear program. We use the optimization toolbox of MATLAB 2024b to solve this linear program. Lastly, we use \eqref{MidU} to construct the continuous-time control input $u_c\in\operatorname{sig}(\mathcal{A}_c)$ that achieves \eqref{ContinuousTimeCondition}, which, by Definition~\ref{Def_Implementation}, indicates that $\bm{\Sigma}_c\models\mathcal{C}_c$.
 
 The displacement, velocity, and the acting force of the robot are depicted in Figure~\ref{Result}. The permissible range for the acting force (dictated by the assumptions $\mathcal{A}_c$) and the safe boundaries for robot operation (prescribed by the guarantees $\mathcal{G}_c$) are illustrated by colored patches. It is clear from Figure~\ref{Result} that the continuous-time control input $u_c$, obtained as in \eqref{MidU}, complies with the limitations dictated by the assumptions $\mathcal{A}_c$, while it enforces the robot to operate within the safe boundaries prescribed by the guarantees $\mathcal{G}_c$.    
 \begin{figure}
 	\centering
 		\includegraphics[width=0.5\textwidth]{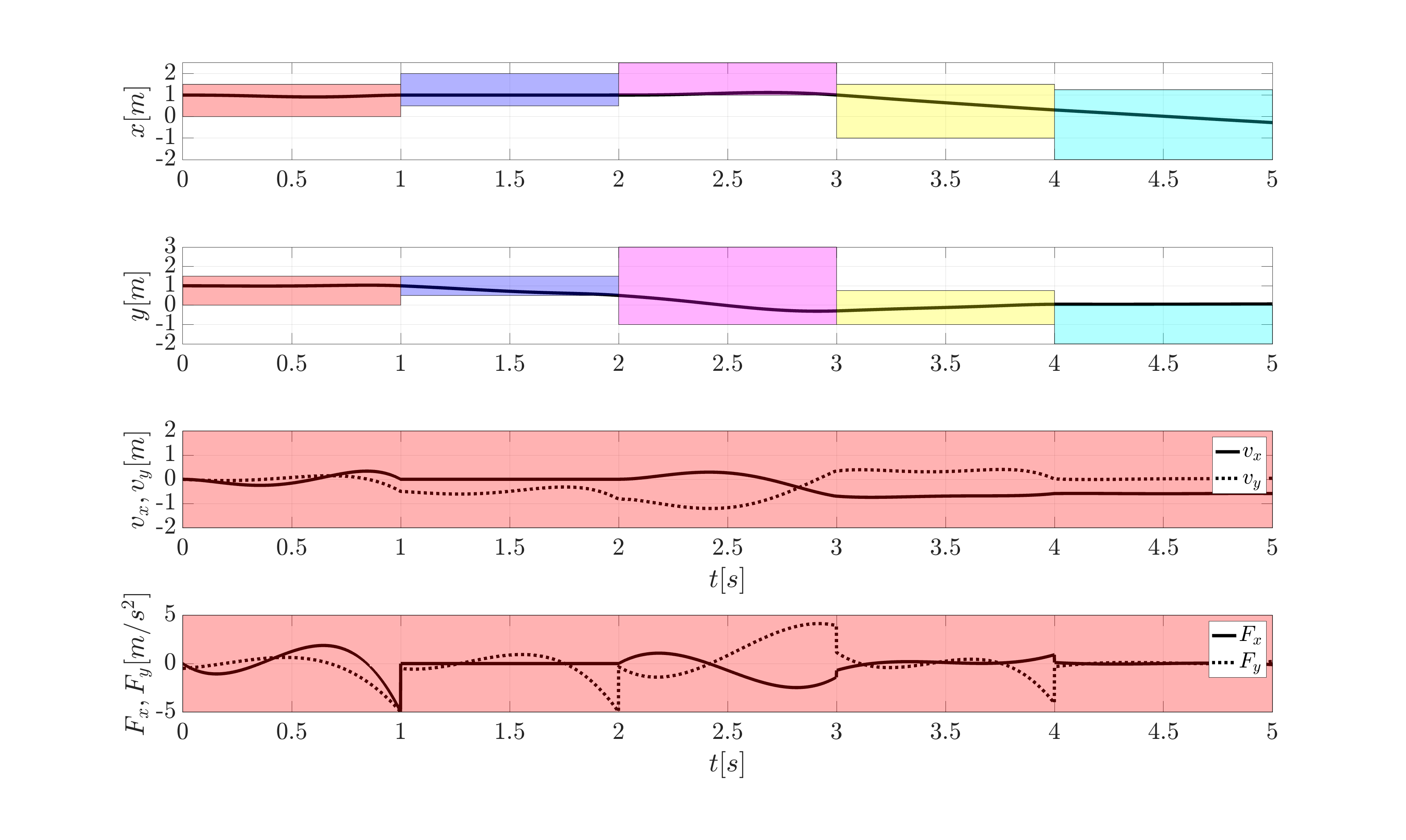}
 		\caption{The continuous-time control input obtained as in \eqref{MidU} satisfies the assumptions $\mathcal{A}_c$ and enforces the robot to operate in accordance with the guarantees $\mathcal{G}_c$. }
 		\label{Result}
 \end{figure}
 \section{Conclusion}\label{Sec_Conclusion}
 In this paper, we adopted the contract formalism to develop a framework that enables translation of the safety control problem from continuous to discrete time. We constructed contracts as pairs of assumptions and guarantees, which are set-valued mappings that respectively describe limitations on the applied control input and safe boundaries within which the system must operate. We then formalized safety control as contract implementation, which determines the feasibility of safety enforcement (as prescribed by guarantees) despite design limitations (as imposed by assumptions). We then established implementation according to a vertical hierarchy, where we formulated the continuous-time implementation problem in terms of a discrete-time one. To accomplish this, we first discretized the continuous-time contract with respect to a suitable sampling time. We then used the notion of system interpolation to construct a discrete-time model such that input-state trajectories of the continuous-time system can be realized as piecewise polynomial interpolations of the input-state trajectories of the discrete-time model. Based on this realization, we derived a condition that guarantees feasibility of the continuous-time implementation problem in terms of the discrete-time one. Lastly, we constructed the control input that yields implementation in continuous time on the basis of the control sequence that achieves implementation in discrete time. 
\appendix
\subsection{Technical Lemma}\label{Sec_TechnicalLemma}
\begin{lemma}\label{TechnicalLemma}
	The matrix $\mathfrak{M}$, defined in \eqref{MMatrix}, is invertible.
\end{lemma}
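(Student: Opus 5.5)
The matrix $\mathfrak{M}$ is square of size $(N+1)\times(N+1)$. Its $i$-th column is $\mathfrak{B}(s_i)$, where $s_i = t_{N,i}/\tau$ for $i=0,1,\ldots,N$. The plan is to factor $\mathfrak{M}$ as a change-of-basis matrix times a Vandermonde matrix and show that both factors are invertible.

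\emph{Step 1 (change of basis).} Expand each Bernstein polynomial in monomials: $\mathfrak{b}_{j,N}(s)=\sum_{p=0}^{N} c_{jp}s^p$. This gives $\mathfrak{B}(s)=C\,\mathbf{m}(s)$, with $\mathbf{m}(s)=(1,s,\ldots,s^N)^\top$ and $C=[c_{jp}]\in\mathbb{R}^{(N+1)\times(N+1)}$. The Bernstein polynomials are linearly independent and span the polynomials of degree at most $N$, as recalled before \eqref{MMatrix}. The monomials are also such a basis. Hence $C$ is invertible. One can also see this directly: $c_{jp}=0$ for $p<j$ and $c_{jj}=\binom{N}{j}\neq0$, so $C$ is triangular with nonzero diagonal.

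\emph{Step 2 (Vandermonde factor).} From Step 1, $\mathfrak{M}=C\,V$, where $V=[\mathbf{m}(s_0)\;\cdots\;\mathbf{m}(s_N)]$ is the transpose of a Vandermonde matrix. Its determinant is $\prod_{i<j}(s_j-s_i)$. By construction the nodes satisfy $0=t_{N,0}<t_{N,1}<\cdots<t_{N,N}<\tau$, and this construction relies on $\mathfrak{L}_N^\tau+\mathfrak{L}_{N+1}^\tau$ having $N+1$ distinct roots. Dividing by $\tau>0$ preserves the strict ordering, so the $s_i$ are distinct points in $[0,1)$. Therefore $\det V\neq 0$.

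\emph{Step 3 (conclusion).} It follows that $\det\mathfrak{M}=\det C\cdot\det V\neq0$.

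An equivalent argument avoids the factorization. Suppose $a^\top\mathfrak{M}=0$. Then the polynomial $a^\top\mathfrak{B}(s)$ has degree at most $N$ and vanishes at the $N+1$ distinct points $s_i$, so it is the zero polynomial. Linear independence of the Bernstein basis then forces $a=0$.

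No step is a real obstacle. The only point needing care is that the nodes are distinct, which is exactly the distinct-roots property of $\mathfrak{L}_N^\tau+\mathfrak{L}_{N+1}^\tau$ from \cite[Section 2.2.3]{Canuto2006} that the paper already invokes when defining them.
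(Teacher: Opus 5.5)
Your proof is correct. Your closing ``equivalent argument'' is essentially the paper's own proof. The paper assumes $c^\top\mathfrak{M}=0$ with $c\neq 0$ and notes that $q(t)=c^\top\mathfrak{B}(t/\tau)$ vanishes at every node $t_{N,j}$. It then expands $q$ in the Lagrange polynomials $\phi_0,\ldots,\phi_N$ to conclude $q\equiv 0$, where you count roots instead. Finally it invokes linear independence of the Bernstein polynomials to force $c=0$.

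Your main route, the factorization $\mathfrak{M}=CV$ into a triangular Bernstein-to-monomial change of basis and a transposed Vandermonde matrix, repackages the same two facts as matrices. Those facts are unisolvence of degree-$N$ interpolation at $N+1$ distinct nodes and the basis property of the Bernstein polynomials. The factorization buys two things. First, an explicit determinant,
\[
\det\mathfrak{M}=\prod_{j=0}^{N}\binom{N}{j}\prod_{0\le i<j\le N}(s_j-s_i)\neq 0 .
\]
Second, through the triangular structure of $C$ with diagonal entries $\binom{N}{j}$, an actual proof of the Bernstein linear independence that the paper only asserts. The paper's route avoids determinants and reuses the polynomials $\phi_j$ already built from the nodes, so it fits more naturally with the surrounding construction.
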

\begin{proof}
	To prove the invertibility of $\mathfrak{M}$, we use contradiction to show that $\mathfrak{M}$ is non-singular. We therefore suppose that $\mathfrak{M}$ is singular. This implies the existence of a non-zero vector $c \in \mathbb{R}^{N+1}$ such that $c^\top\mathfrak{M} = 0$. We let $c_0,c_1,\ldots,c_N\in\mathbb{R}$ be such that $c = (c_0,c_1,\cdots,c_N)^\top$ and define the polynomial $q\in\restr{\mathbb{P}_N^1}{[0,\tau]_{\mathbb{R}}}$ such that 
	\begin{equation}\label{Technical1}
		\forall t\in[0,\tau]_{\mathbb{R}}: \quad q(t) = c^ \top\mathfrak{B}\left(\frac{t}{\tau}\right).
	\end{equation}
	This, as a consequence of fact that $c^ \top\mathfrak{M} = 0$, implies that 
	\begin{equation}\label{Technical2}
		\forall j\in[0,N]_{\mathbb{Z}}: \quad q(t_{N,j}) = 0.
	\end{equation} 
	We now consider the polynomial function \eqref{PhiFunction} and recall that $\phi_i(t_{N,i}) = 1$, whereas $\phi_i(t_{N,j}) = 0$ for all $j\neq i$. This indicates that polynomials $\phi_0,\phi_1,\ldots,\phi_N$ are linearly independent and, therefore, form a basis for $\restr{\mathbb{P}_N^{1}}{[0,\tau]_{\mathbb{R}}}$. As a consequence, by considering that $q\in\restr{\mathbb{P}_N^1}{[0,\tau]_{\mathbb{R}}}$, we immediately conclude that  
	\begin{equation*}
		\forall t\in[0,\tau]_{\mathbb{R}}: \quad q(t) = \sum_{j=0}^{N}q(t_{N,j}) \phi_j(t),
	\end{equation*}
	which, as a result of \eqref{Technical2}, implies that 
		\begin{equation*}
		\forall t\in[0,\tau]_{\mathbb{R}}: \quad q(t) = 0.
	\end{equation*}
	This, together with \eqref{Technical1}, results in
	\begin{equation*}
		\forall t\in[0,\tau]_{\mathbb{R}}: \quad c^ \top\mathfrak{B}\left(\frac{t}{\tau}\right) = 0,
	\end{equation*}
	which, due to the linear independence of the Bernstein basis polynomials, implies that $c = 0$, contradicting the initial assumption that $c$ is non-zero. We thus conclude, by contradiction, that $\mathfrak{M}$ is non-singular and, therefore, invertible.  
\end{proof}
\subsection{Proof of Theorem~\ref{Th_Implication}}\label{ProofTh_Implication}
Suppose that there exists a control sequence $u_d\in\operatorname{sig}(\mathcal{A}_d)$ such that \eqref{DiscCond} and \eqref{InclusionCond} hold for all $k\in[0,\frac{T}{\tau}-1]_{\mathbb{Z}}$. We let $u_c:[0,T]\rightarrow\mathbb{R}^m$ be such that \eqref{MidU} holds for all $k\in[0,\frac{T}{\tau}-1]_{\mathbb{Z}}$ and all $t\in[0,\tau]_{\mathbb{R}}$. We show that $u_c\in\operatorname{sig}(\mathcal{A}_c)$ and that \eqref{ContinuousTimeCondition} holds, which, by Definition~\ref{Def_Implementation}, implies that $\bm{\Sigma}_c\models\mathcal{C}_c$.

Considering \eqref{MidU}, we immediately conclude from Theorem~\ref{PropTrajectorySetCharacterization} that for any $k=0,1,\ldots,\frac{T}{\tau}-1$, we have that
	\begin{equation}\label{MidX}
		x_c(k\tau+t; x_0,u_c) = x_d(k;x_0,u_d)\phi_0(t)+X^k\Phi(t)
	\end{equation}
	for all $t\in[0,\tau]_{\mathbb{R}}$. We now show that $u_c\in\operatorname{sig}(\mathcal{A}_c)$ and that $\restr{x_c(\cdot;x_0,u_c)}{[0,T]_{\mathbb{R}}} \in \operatorname{sig}(\mathcal{G}_c)$. For this purpose, we observe from \eqref{MidU} and \eqref{MidX} that there exist matrices $V^k\in\mathbb{R}^{m\times (N+1)}$ and $W^k\in\mathbb{R}^{n\times (N+1)}$ such that  \eqref{BersnteinRepresentation} holds. Since $\mathcal{A}_d(k)$ and $\mathcal{G}_d(k)$ are convex sets, we immediately conclude from \eqref{InclusionCond}
	that for any $k=0,1,\ldots,\frac{T}{\tau}-1$, we have that 
	\begin{equation*}
		\forall t\in[0,\tau]_{\mathbb{R}}: \; u_c(k\tau+t) \in \mathcal{A}_d(k), \; x_c(k\tau+t; x_0,u_c) \in \mathcal{G}_d(k),
	\end{equation*}
	where we have used the fact that Bernstein polynomials are non-negative and form a partition of unity.
	 This, as a consequence of \eqref{SampledContract}, implies that for all $t\in[0,\tau]_{\mathbb{R}}$, we have that 
	\begin{equation*}
		\begin{aligned}
			u_c(k\tau+t) \in \mathcal{A}_c(k\tau+t), \quad x_c(k\tau+t; x_0,u_c) \in \mathcal{G}_c(k\tau+t),
		\end{aligned}
	\end{equation*}
	where we have also used the fact that $\mathcal{A}_d(k) \neq \emptyset$ and $\mathcal{G}_d(k) \neq \emptyset$ for all $k\in [0,\frac{T}{\tau}]_{\mathbb{Z}}$. This implies that $u_c\in\operatorname{sig}(\mathcal{A}_c)$ and that \eqref{ContinuousTimeCondition} holds, which, by Definition~\ref{Def_Implementation}, indicates that $\bm{\Sigma}_c\models\mathcal{C}_c$.
\bibliographystyle{IEEEtran}
\bibliography{Reference.bib}

\end{document}